\newcommand{\CC}{{_{0}C^1[0,T]}}
\newcommand{\va}{\varphi}
\newcommand{\ppp}{\partial}
\newcommand{\ooo}{\overline}
\newcommand{\sumij}{\sum_{i,j=1}^n}
\newcommand{\fdif}{\partial_t^{\alpha}}
\newcommand{\OOO}{\Omega}
\newcommand{\N}{\mathbf{N}}
\newtheorem{theorem}{Theorem}
\newtheorem{corollary}[theorem]{Corollary}
\newtheorem{lemma}[theorem]{Lemma}
\begin{document}
\title[On the maximum principle for a time-fractional diffusion equation] {On the maximum principle for a time-fractional diffusion equation \footnote{This E-Print is a reproduction with a different layout of the paper published in
\emph{Fract. Calc. Appl. Anal.} \textbf{20} (2017), 1131--1145, DOI: https://doi.org/10.1515/fca-2017-0060 }}


\author{Yuri Luchko}
\address{Department of Mathematics, Physics, and Chemistry, 
Beuth University of Applied Sciences Berlin, 
Luxemburger Str. 10,
13353 -- Berlin, GERMANY }
\email{luchko@beuth-hochschule.de}

\author{Masahiro Yamamoto}
\address{Dept. of Mathematical Sciences, 
The University of Tokyo, 
Komaba, Meguro, 
153 -- Tokyo, JAPAN}
\email{myama@ms.u-tokyo.ac.jp}

\subjclass[2000]{Primary 26A33; Secondary 35A05, 35B30, 35B50, 35C05, 35E05, 35L05, 45K05, 60E99}


\keywords{Caputo fractional derivative, time-fractional diffusion equation,
initial-boundary-value problems, weak maximum principle, comparison principle}

 \begin{abstract}

In this paper, we discuss the maximum principle for a time-fractional diffusion equation
\vskip -10pt
$$
\fdif u(x,t) = \sum_{i,j=1}^n \ppp_i(a_{ij}(x)\ppp_j u(x,t))
+ c(x)u(x,t) + F(x,t),\ t>0,\ x \in \Omega \subset {\mathbb R}^n,
$$
with the Caputo time-derivative of the order $\alpha \in (0,1)$ in the case
of the homogeneous Dirichlet boundary condition.
Compared to the already published results, our findings have two important
special features.  First, we derive a maximum principle for a suitably defined
weak solution in the fractional Sobolev spaces, not for the strong solution.
Second, for the non-negative source functions $F = F(x,t)$
we prove the non-negativity of the weak solution to the problem under consideration without
any restrictions on the sign of the coefficient $c=c(x)$ by the derivative of order zero in the spatial differential operator.  
Moreover, we prove the monotonicity of the solution with respect to the coefficient $c=c(x)$.

 \end{abstract}
  


 \vspace*{-4pt}


\maketitle


\section{Introduction}\label{sec1}

\setcounter{section}{1}
\setcounter{equation}{0}\setcounter{theorem}{0}

During the last few decades various fractional generalizations of the classical diffusion equation were introduced and intensely discussed both in the mathematical literature and in different applications, say, as models for the so called anomalous diffusion (see e.g. \cite{Met14} and the numerous references therein).
The mathematical theory of the fractional diffusion equations is nowadays
under remarkable development, but still it is not as complete as the theory of the partial differential equations of the parabolic type.

One of the recent research topics in this theory is studying the analogies of the maximum principles known for the parabolic and elliptic types of partial differential equations as well as their applications to analysis of solutions to the boundary- or initial-boundary-value problems for the fractional partial differential equations.
The first publications that should  be mentioned in this connection are the papers \cite{Koc1} and \cite{Koc2},  where a kind of a maximum principle was employed for analysis of some fractional partial differential equations without an explicit formulation of this principle.
In \cite{luchko-1}, a weak maximum principle for a single-term time-fractional diffusion equation with the Caputo fractional derivative was formulated and proved for the first time. In \cite{luchko-2}, this principle was applied
for an a priori estimate for solutions to the initial-boundary-value problems
for a multi-dimensional time-fractional diffusion equation.
The weak maximum principles for multi-term time-fractional diffusion equations
and time-fractional diffusion equations with the Caputo fractional derivatives
of the distributed orders were introduced and applied in \cite{luc-mt} and \cite{luc-do}, respectively. In \cite{Liu}, a strong maximum principle for time-fractional diffusion equations with the Caputo derivatives was established and applied for proving a uniqueness result for a related inverse source problem of determination of the temporal component of the source equation term. 
In \cite{Al1}, \cite{Al2}, and \cite{Al3} the maximum principles for single-,\break  multi-term, and distributed order  fractional diffusion equations with the Riemann-Liouville fractional derivatives, respectively,  were proved and employed for analysis of solutions to the initial-boundary-value problems for linear and non-linear time-fractional partial differential equations. A maximum principle for multi-term time-space fractional differential equations with the modified Riesz space-fractional derivative in the Caputo sense was introduced and employed in \cite{Ye}.  In \cite{Liu1},
a maximum principle for multi-term time-space variable-order fractional differential equations with the Riesz-Caputo fractional derivatives was proved and applied for analysis of these equations.
Finally, we mention a very recent paper \cite{luc-yam}, where a weak maximum principle for a general
time-fractional diffusion equation which was introduced in \cite{Koch11},
was derived and employed for proving the uniqueness of both the strong and the weak solutions to
the initial-boundary-value problem for this equation. The general
time-fractional diffusion equation contains both single- and
multi-term time-fractional diffusion equations as well as time-fractional
diffusion equation of the distributed order among its particular cases and is a new object in fractional calculus worth to be investigated in detail.

\vskip 2pt 

In this paper, we revisit the weak maximum principle for the time-fractional
diffusion equation
\vskip -10pt
$$
\fdif u(x,t) = \sum_{i,j=1}^n \ppp_i(a_{ij}(x)\ppp_ju(x,t))
+ c(x)u(x,t) + F(x,t),\ t>0,\ x \in \Omega \subset {\mathbb R}^n
$$
\vskip -3pt \noindent
with the Caputo time-derivative of the order $\alpha \in (0,1)$ and prove it for a suitably defined weak solution in the fractional Sobolev spaces and without any restrictions on the sign of the coefficient $c=c(x)$.

The rest of this paper is organized as follows. In Section \ref{sec2},  the problem
that we are dealing with as well as our results are formulated. Section \ref{sec3} is devoted to a proof  of a key lemma that is a basis for the proofs of all other results. The lemma asserts that the solution mapping $\{a, F\} \longrightarrow
u_{a,F}$ ($a$ and $F$ denote an initial condition and a source function of the problem under consideration, respectively, and $u_{a,F}$ denotes its weak solution)  preserves its sign.  In Section \ref{sec4}, the key lemma and the fixed point theorem are employed to prove the maximum and comparison principles and some of their corollaries.  Finally, some conclusions and remarks are formulated in the last section.


\section{Problem formulation and main results} 
\label{sec2}

\setcounter{section}{2}
\setcounter{equation}{0}\setcounter{theorem}{0}

In this paper, we deal with the following initial-boundary-value problem for the single-term time-fractional diffusion equation
\vskip - 10pt
$$
\fdif u(x,t) = \sum_{i,j=1}^n \ppp_i(a_{ij}(x)\ppp_ju(x,t))
+ c(x)u(x,t) + F(x,t), \ x \in \Omega \subset {\mathbb R}^n, \thinspace t>0, \eqno{(2.1)}
$$
$$
u(x,t) = 0, \qquad x\in \ppp\Omega, \thinspace t>0, \eqno{(2.2)}
$$
$$
u(x,0) = a(x), \qquad x\in \Omega                 \eqno{(2.3)}
$$
with $0 < \alpha < 1$ and in a bounded domain $\Omega $ with a smooth boundary
$\ppp\Omega$. In what follows, we always suppose that
$a_{ij} \equiv  a_{ji} \in C^1(\ooo{\Omega})$, $1\le i,j \le n$, $c \in
C(\ooo{\Omega})$, and there exists a constant $\mu_0 > 0$ such that
$\sum_{i,j=1} a_{ij}(x)\xi_i\xi_j \ge \mu_0 \sum_{i=1}^n \xi_i^2$
for all $x \in \ooo{\Omega}$ and $\xi_1, ..., \xi_n \in \mathbb{R}$, i.e., that the spatial differential operator in equation (2.1) is a uniformly elliptic one.
The fractional derivative $\fdif u$ in (2.1) is defined in  the Caputo sense by
$$
\fdif u(x,t) = \frac{1}{\Gamma(1-\alpha)}\int^t_0
(t-s)^{-\alpha}\ppp_su(x,s) ds, \quad x\in \Omega, \thinspace
t > 0, \quad u \in C^1[0,T].
$$
In \cite{GLY}, the Caputo fractional derivative $\fdif$ was extended to an operator defined on
the closure $H_{\alpha}(0,T)$ of $\CC := \{ u \in C^1[0,T];\thinspace
u(0) = 0\}$ in the fractional Sobolev space $H^{\alpha}(\OOO)$. In what follows,
we regard $\fdif u$ in (2.1) as this extension with the domain
$H_{\alpha}(0,T)$ (see \cite{GLY} for details).
Thus we interpret the problem (2.1) - (2.3) as the fractional diffusion equation (2.1) subject to the inclusions
$$\left\{ \begin{array}{l}
u(\cdot,t) \in H^1_0(\OOO), \quad t>0, \\
u(x,\cdot) - a(x) \in H_{\alpha}(0,T), \quad x \in \OOO.
\end{array}\right.
                                             \eqno{(2.4)}
$$
According to the results presented in \cite{GLY}, for any initial condition $a \in L^2(\OOO)$ and
any source function $F \in L^2(\OOO\times (0,T))$,
there exists a unique weak solution
$u_{a,F} \in L^2(0,T;H^2(\OOO)\cap H^1_0(\OOO)) \cap
H_{\alpha}(0,T; L^2(\OOO))$ to (2.1) subject to the inclusions (2.4).
For $\frac{1}{2} < \alpha < 1$, in view of the Sobolev embedding the solution $u$ belongs to the functional space $C([0,T];L^2(\OOO))$
and satisfies the initial condition (2.3) in the $L^2$-sense.

The focus of this paper is on the weak maximum principle for the equation (2.1), which
says that the inequalities $F(x,t)\ge 0,\ (x,t)\in \Omega \times (0,T)$ and
$a(x)\ge 0,\ x \in \Omega$ yield the inequality $u(x,t) \ge 0,\ (x,t) \in \Omega \times (0,T)$ for
the weak solution to the initial-boundary value problem (2.1)-(2.3) defined as in \cite{GLY}.

A maximum principle for the strong solution to the initial-boundary value problem (2.1)-(2.3) was
first proved  in \cite{luchko-1}  under the assumption that
\vskip -12pt
$$
c(x) \le 0, \quad x \in \ooo{\Omega}.
$$
Moreover, the case of an inhomogeneous Dirichlet boundary
condition
$$
u\vert_{\ppp\Omega\times (0,T)} = b(x,t)
$$
was also considered in \cite{luchko-1}, but in this paper, for the sake of technical
simplicity, we assume the homogeneous boundary condition $u\vert_{\ppp\Omega\times (0,T)} = 0$ 
although our method can be applied for the case of an inhomogeneous  boundary condition, too.

\smallskip

The main result of this paper is a proof of the weak maximum principle  for the equation (2.1) with any $c \in C(\ooo{\OOO})$
without the non-negativity condition $c(x)\le 0,\  x \in \ooo{\Omega}$. As it is known, the weak maximum principle for the partial differential equations of the parabolic type is valid without any condition on the sign of the coefficient $c=c(x)$ (see e.g. \cite{MP1} or \cite{walter}). The proof of this fact uses the properties of the exponential function and reduces the case of a bounded coefficient $c=c(x)$ to the case of a non-negative coefficient. This technique does not work in the case of the fractional diffusion equation (2.1) and thus we were forced to invent a new and more complicated proof method.

Let us now denote the solution to the initial-boundary value problem (2.1)-(2.3)  defined as in \cite{GLY} by $u_{a,F}$ and   formulate our results.

\vspace*{-3pt} 

\begin{theorem}
\label{t1}
Let $a \in L^2(\OOO)$ and $F \in L^2(\OOO\times (0,T))$.
If $F(x,t) \ge 0$ a.e. (almost everywhere) in $\OOO\times (0,T)$ and
$a(x) \ge 0$ a.e. in $\OOO$, then
$u_{a,F}(x,t) \ge 0$ a.e. in $\OOO\times (0,T)$.
\end{theorem}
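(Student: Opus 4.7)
The classical parabolic trick of removing the zero-order term $c(x)u$ via the substitution $v = e^{-\la t} u$ fails because the Caputo derivative does not obey a Leibniz rule with $e^{-\la t}$. Instead I would split $c = c^+ - c^-$ with $c^\pm \ge 0$ in $C(\ooo\OOO)$, absorb $-c^- u$ into the elliptic part to form
$$
\mathcal L w := \sumij \ppp_i(a_{ij}\ppp_j w) - c^-(x) w,
$$
whose zero-order coefficient $-c^-$ is non-positive, and treat the remaining $c^+ u$ as a perturbation to be handled by a fixed-point iteration built on the key lemma of Section 3.

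\textbf{Key lemma.} Let $v_{a,G}$ denote the unique weak solution (in the sense of [GLY]) of $\fdif v = \mathcal L v + G$ on $\OOO \times (0,T)$ with $v|_{\ppp\OOO} = 0$ and $v(\cdot,0) = a$. The key lemma asserts that $v_{a,G} \ge 0$ a.e.\ whenever $a \ge 0$ and $G \ge 0$ a.e. A natural route is a Galerkin approximation on the eigenbasis of $-\mathcal L$ on $H^1_0(\OOO)$, testing with the negative part $v_N^-$ of the approximate solution, and invoking the Alikhanov-type chain-rule inequality $(\fdif v_N)\, v_N^- \le -\tfrac12 \fdif (v_N^-)^2$. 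The non-positivity of $-c^-$ keeps the elliptic energy on the correct side, yielding $\fdif \|v_N^-\|^2_{L^2(\OOO)} \le 0$, and thus $v_N^- \equiv 0$ via a fractional Gr\"onwall argument; the conclusion then persists in the limit $N \to \infty$.

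\textbf{Fixed-point argument and main obstacle.} Define $\Phi: L^2(\OOO \times (0,T)) \to L^2(\OOO \times (0,T))$ by $\Phi(w) := v_{a,\, c^+ w + F}$. Because $a, F, c^+ \ge 0$ and $w \ge 0$, the key lemma gives $\Phi(w) \ge 0$, so $\Phi$ maps the non-negative cone into itself; any fixed point $u^*$ of $\Phi$ solves $\fdif u^* = \sumij \ppp_i(a_{ij}\ppp_j u^*) + c u^* + F$ with the prescribed initial and boundary data, and by uniqueness of the weak solution in [GLY] coincides with $u_{a,F}$. Linearity reduces contraction of $\Phi$ to an a priori bound of the form $\|v_{0,G}\|_{L^2(0,T;L^2(\OOO))} \le K\,\|G\|_{L^2(0,T;L^2(\OOO))}$, which the GLY theory supplies. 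The main obstacle is that the naive constant $K$ need not satisfy $\|c^+\|_{C(\ooo\OOO)} K < 1$ for an arbitrary $T$; I would resolve this by introducing a Bielecki/Mittag-Leffler-type weighted norm on $L^2(0,T; L^2(\OOO))$ adapted to the fractional setting, in which $\Phi$ becomes a global contraction on $[0,T]$ (equivalently, by iterating the short-time argument and absorbing the history contribution of the Caputo derivative into a modified source at each step). Banach's fixed-point theorem then produces the non-negative fixed point $u^* = u_{a,F}$ on the whole cylinder, completing the proof.
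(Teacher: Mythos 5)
Your overall architecture coincides with the paper's: both proofs pass to an auxiliary operator whose zero-order coefficient is non-positive, establish a key lemma asserting that the corresponding solution map preserves non-negativity of the data, and then recover $u_{a,F}$ as the limit of a fixed-point iteration that stays in the non-negative cone because the leftover non-negative zero-order term is fed back into the source. The differences are in the details. The paper shifts the whole coefficient, $c_0(x) = c(x) - (M+1)$ with $M = \max_{x\in\ooo{\OOO}}|c(x)|$, rather than splitting $c = c^+ - c^-$; either works, though the paper's key lemma is stated under $c<0$, which the shift guarantees. The paper also needs no Bielecki-type weighted norm: it estimates the Picard differences $d_{n+1}(t) = \int_0^t K(t-s)\,d_n(s)\,ds$ by iterated convolution of $t^{\alpha-1}$, obtaining $\Vert d_n(t)\Vert \le (C\Gamma(\alpha))^{n-1}M_0\,t^{(n-1)\alpha}/\Gamma((n-1)\alpha+1)$, whose sum converges by the growth of the Gamma function (a Weissinger-type argument). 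Your weighted-norm device achieves the same end and is an acceptable substitute.

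The genuine gap is in your proof of the key lemma. In a spectral Galerkin scheme the approximate solution $v_N$ lives in $\mathrm{span}\{\va_1,\dots,\va_N\}$, and its negative part $v_N^-$ does not; the Galerkin identity holds only against test functions in that span, so you cannot test with $v_N^-$, and the inequality $\fdif\Vert v_N^-\Vert^2 \le 0$ does not follow. (Indeed, truncated eigenfunction expansions of non-negative data are in general not non-negative, so no sign-preservation can be expected at the level of the truncation.) The energy computation with $v^-$ and the convex chain-rule inequality is sound at the continuous level, but then you must first establish enough time- and space-regularity of the weak solution to test the equation itself with $v^-$ and to justify the Alikhanov-type inequality for it. The paper closes this by a different route: it approximates $a$ and $F$ by $C^{\infty}_0$ data, shows via the Mittag-Leffler representation (3.2)--(3.3), the smoothing estimates (3.4) and Sobolev embedding that the solution is then a strong solution, applies the pointwise maximum principle of Luchko (2009) for strong solutions with negative $c$, and passes to the limit using the continuity of the solution map $\{a,F\}\mapsto L(a,F)$ in $L^2$. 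Either repair (regularity first, or smooth-data approximation) would make your key lemma correct; as written, the Galerkin step does not.
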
  

\vspace*{-2pt}

Let us mention that in \cite{luchko-1}  the maximum principle was stated
pointwise (i.e., for all points
from $\ooo{\OOO} \times [0,T]$) for the strong solution under the assumption that
$c(x)\le 0,\ x \in \OOO$. In Theorem \ref{t1},
the maximum principle is formulated for the weak solution and our proof
is based on the fixed point theorem and the property that the
solution mapping $\{a, F\} \longrightarrow u_{a,F}$  preserves its sign on the set of the
weak solutions and thus the non-negativity of the solution is
valid almost everywhere and not pointwise.

Theorem \ref{t1} immediately yields the following comparison property:

\vspace*{-3pt} 

\begin{corollary}
\label{c1}
Let $a_1, a_2  \in L^2(\OOO)$ and $F_1, F_2 \in L^2(\OOO\times (0,T))$
satisfy the inequalities $a_1(x) \ge a_2(x)$ a.e. in $\OOO$ and $F_1(x,t) \ge F_2(x,t)$ a.e. in
$\OOO\times (0,T)$, respectively.  Then $u_{a_1,F_1}(x,t) \ge u_{a_2,F_2}(x,t)$ a.e. in
$\OOO\times (0,T)$.
\end{corollary}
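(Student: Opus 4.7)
The plan is to reduce the comparison statement to Theorem \ref{t1} via linearity. Concretely, I would introduce the difference $w(x,t) := u_{a_1,F_1}(x,t) - u_{a_2,F_2}(x,t)$ and argue that $w$ is itself the weak solution to the problem (2.1)--(2.3) with initial datum $a := a_1 - a_2$ and source $F := F_1 - F_2$. Both quantities lie in the correct spaces ($L^2(\OOO)$ and $L^2(\OOO \times (0,T))$ respectively), and by hypothesis both are non-negative almost everywhere. Theorem \ref{t1} then delivers $w \ge 0$ a.e., which is exactly the claimed inequality.

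To justify that $w = u_{a_1-a_2,\,F_1-F_2}$, I would rely on two facts already in place. First, every ingredient of the formulation (2.1)--(2.4) is linear: the operator $\fdif$ (in its extended sense on $H_\alpha(0,T)$), the uniformly elliptic spatial operator $\sum_{i,j}\ppp_i(a_{ij}\ppp_j\,\cdot\,) + c(x)\,\cdot$, the homogeneous Dirichlet boundary condition, and the membership relations in (2.4). Hence $w$ satisfies (2.1) with datum $a_1 - a_2$ and source $F_1 - F_2$ and obeys (2.2) and the inclusions (2.4). Second, the cited well-posedness result from \cite{GLY} guarantees uniqueness of the weak solution in $L^2(0,T;H^2(\OOO)\cap H^1_0(\OOO)) \cap H_\alpha(0,T;L^2(\OOO))$, so $w$ must coincide with $u_{a_1-a_2,\,F_1-F_2}$.

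There is no genuine obstacle here; the only point that deserves a line of care is checking that the extension of the Caputo derivative to $H_\alpha(0,T)$ is $\R$-linear, so that the difference of two functions satisfying (2.4) with data $a_1,a_2$ still lies in $H_\alpha(0,T)$ with effective datum $a_1-a_2$. This is immediate from the construction of the extension in \cite{GLY} as a closure of a linear operator densely defined on $\CC$. With this verified, applying Theorem \ref{t1} to $w$ with non-negative datum and source finishes the proof in one line.
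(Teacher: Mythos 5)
Your proposal is correct and follows essentially the same route as the paper: both form the difference $y = u_{a_1,F_1} - u_{a_2,F_2}$, identify it by linearity (and uniqueness of the weak solution) as $u_{a_1-a_2,\,F_1-F_2}$, and apply Theorem \ref{t1} to the non-negative data $a_1-a_2$ and $F_1-F_2$. Your extra remarks on the linearity of the extended Caputo derivative only make explicit what the paper leaves implicit.
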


\vspace*{-2pt} 

Corollary \ref{c1} can be employed among other things to remove the condition $c(x)\le 0$ from the formulation of the strong maximum principle for the fractional diffusion equation that was derived in \cite{Liu}.

Let us now fix a source function $F = F(x,t) \ge 0$ and an initial condition $a= a(x) \ge 0$ and denote by $u_c = u_c(x,t)$ the weak solution to the initial-boundary-value problem (2.1)-(2.3) with
the coefficient $c=c(x)$.
Then the following comparison property is valid:

\vspace*{-3pt} 

\begin{theorem}
\label{t2}
Let $c_1, c_2 \in C(\ooo{\OOO})$ satisfy the inequality $c_1(x) \ge c_2(x)$ in $\OOO$.
Then $u_{c_1}(x,t) \ge u_{c_2}(x,t)$ in $\OOO \times (0,T)$.
\end{theorem}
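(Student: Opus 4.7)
The plan is to reduce Theorem \ref{t2} to Theorem \ref{t1} by applying it twice: once to control the sign of $u_{c_2}$, and a second time to control the sign of the difference $w := u_{c_1} - u_{c_2}$. The underlying idea is the standard trick of turning a comparison with respect to a zero-order coefficient into an effective non-negative source term.

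First I would apply Theorem \ref{t1} directly to $u_{c_2}$. Since $a \ge 0$ a.e.\ in $\OOO$ and $F \ge 0$ a.e.\ in $\OOO \times (0,T)$ by assumption, and since Theorem \ref{t1} places no restriction on the sign of the zero-order coefficient, we immediately obtain $u_{c_2}(x,t) \ge 0$ a.e.\ in $\OOO \times (0,T)$. Note that $u_{c_2} \in L^2(0,T;H^2(\OOO) \cap H^1_0(\OOO))$, so in particular $u_{c_2} \in L^2(\OOO \times (0,T))$.

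Next I would introduce $w := u_{c_1} - u_{c_2}$. By the linearity of the Caputo derivative $\fdif$ (extended to $H_{\alpha}(0,T)$) and of the elliptic operator, subtracting the two equations for $u_{c_1}$ and $u_{c_2}$ gives
$$
\fdif w(x,t) = \sum_{i,j=1}^n \ppp_i(a_{ij}(x)\ppp_j w(x,t)) + c_1(x) w(x,t) + \widetilde{F}(x,t),
$$
where $\widetilde{F}(x,t) := (c_1(x) - c_2(x))\, u_{c_2}(x,t)$, together with the zero initial condition $w(\cdot,0) = 0$ in $\OOO$ and the homogeneous Dirichlet boundary condition on $\ppp\OOO \times (0,T)$. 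Since $c_1 - c_2 \in C(\ooo{\OOO})$ and $c_1(x) \ge c_2(x)$ on $\OOO$, and since $u_{c_2} \ge 0$ by the previous step, we have $\widetilde{F} \ge 0$ a.e.\ in $\OOO \times (0,T)$. Moreover, $\widetilde{F} \in L^2(\OOO \times (0,T))$ because $c_1 - c_2$ is bounded and $u_{c_2} \in L^2(\OOO \times (0,T))$.

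At this stage I would apply Theorem \ref{t1} a second time, now to the problem solved by $w$, with coefficient $c_1$, initial datum $0$, and source $\widetilde{F} \ge 0$. This is legitimate precisely because Theorem \ref{t1} imposes no sign condition on the zero-order coefficient. We conclude $w(x,t) \ge 0$ a.e.\ in $\OOO \times (0,T)$, i.e., $u_{c_1}(x,t) \ge u_{c_2}(x,t)$ a.e., which is the claim. The main delicate point to verify is that $w$ is genuinely the weak solution (in the sense of \cite{GLY}) of the stated problem with source $\widetilde{F}$ and zero initial datum, so that Theorem \ref{t1} applies; this in turn hinges on the linearity of the solution map $\{a,F\}\mapsto u_{a,F}$ for a fixed zero-order coefficient, which follows from the existence and uniqueness theory in \cite{GLY}.
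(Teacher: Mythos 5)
Your proposal is correct and follows essentially the same route as the paper: the paper likewise sets $z = u_{c_1}-u_{c_2}$, observes that $z$ solves the problem with coefficient $c_1$, zero initial datum, and source $(c_1-c_2)u_{c_2}$, and applies Theorem \ref{t1} twice (once to get $u_{c_2}\ge 0$, once to get $z\ge 0$). Your added remarks on the $L^2$-integrability of the effective source and the linearity of the solution map are sensible points the paper leaves implicit.
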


\vspace*{-3pt} 

One of the useful consequences from Theorem \ref{t2} is given in the following statement:

\vspace*{-3pt} 

\begin{corollary}
\label{c2}
Let $n\le 3$ ($\Omega \subset \mathbb{R}^n$), the initial condition $a \in L^2(\OOO)$
satisfy the inequality $a(x) \ge 0$ a.e. in $\OOO$, $a\not\equiv 0$, and the source function be identically equal to zero, i.e.,
$F(x,t)\equiv 0,\  x \in \Omega, \ t>0$.
Then the weak solution $u$ to the initial-boundary-value problem (2.1)-(2.3) satisfies the inclusion $u \in C((0,T]; C(\ooo{\OOO}))$ and for each $x \in \Omega$
the set $\{ t:\, t >0 \wedge \thinspace u(x,t) \le 0\}$ is at most a finite set.
\end{corollary}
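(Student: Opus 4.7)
The plan is to combine the almost-everywhere non-negativity from Theorem~\ref{t1} with real-analyticity of $u$ in the time variable on $(0,T]$ and to exclude the degenerate case $u(x,\cdot)\equiv 0$ via positivity of the principal eigenfunction of the spatial operator.

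\emph{Setup and pointwise non-negativity.} I would start from the eigenfunction representation (standard for this class of problem, see e.g.~\cite{GLY})
\[
u(x,t)=\sum_{k=1}^{\infty}a_k\,E_{\alpha,1}(-\lambda_k t^\alpha)\,\varphi_k(x),\qquad a_k:=(a,\varphi_k)_{L^2(\OOO)},
\]
where $(\lambda_k,\varphi_k)$ are the eigenpairs of the self-adjoint realization of $-Lu:=-\sumij\ppp_i(a_{ij}\ppp_j u)-c(x)u$ on $H^2(\OOO)\cap H^1_0(\OOO)$; only finitely many $\lambda_k$ can be non-positive because $c\in C(\ooo{\OOO})$. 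The decay $|E_{\alpha,1}(-\lambda_k t^\alpha)|\le C/(1+\lambda_k t^\alpha)$ for $\lambda_k>0$ forces $u(\cdot,t)\in D((-L)^m)$ for every $m$ whenever $t>0$, and the embedding $H^2(\OOO)\hookrightarrow C(\ooo{\OOO})$---valid since $n\le 3$---then gives $u\in C((0,T];C(\ooo{\OOO}))$. Combined with Theorem~\ref{t1}, this upgrades $u\ge 0$ a.e.\ to the pointwise inequality on $\OOO\times(0,T]$.

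\emph{Analyticity and non-triviality.} For each fixed $x\in\OOO$, I would show $t\mapsto u(x,t)$ is real-analytic on $(0,T]$: the functional calculus makes $E_{\alpha,1}(-Lt^\alpha)$ a holomorphic operator-valued function of $t$ in a sector about the positive real axis, so $u(\cdot,t)$ is $H^2$-valued holomorphic, and the bounded point-evaluation on $H^2(\OOO)$ transfers this to the scalar level. To exclude $u(x,\cdot)\equiv 0$ I would invoke that the principal eigenvalue $\lambda_1$ is simple with a strictly positive eigenfunction $\varphi_1$ in $\OOO$ (Krein-Rutman after shifting so the operator becomes positive), whence $a_1=(a,\varphi_1)>0$ from $a\ge 0$, $a\not\equiv 0$. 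If $u(x,\cdot)\equiv 0$, the identity $\sum_k a_k\varphi_k(x)\,E_{\alpha,1}(-\lambda_k t^\alpha)\equiv 0$ together with linear independence of the Mittag-Leffler functions for distinct parameters (visible through the distinct Laplace transforms $p^{\alpha-1}/(p^\alpha+\lambda)$) forces the coefficient $a_1\varphi_1(x)$ of the simple principal term to vanish, contradicting $a_1,\varphi_1(x)>0$.

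\emph{Finiteness of the zero set.} A non-trivial real-analytic function on $(0,T]$ has only isolated zeros, hence finitely many on every compact $[\epsilon,T]$. To obtain finiteness on all of $(0,T]$ I would preclude accumulation at $t=0^+$ using the substitution $s=t^\alpha$: each factor $E_{\alpha,1}(-\lambda_k t^\alpha)$ becomes the entire function $E_{\alpha,1}(-\lambda_k s)$ of $s$, and combined with the instantaneous smoothing $u(\cdot,t_0)\in C^\infty(\ooo{\OOO})$ for any $t_0>0$, a Vitali-type equicontinuity argument on the partial sums extends $s\mapsto u(x,s^{1/\alpha})$ analytically across $s=0$, forbidding any accumulation of zeros there. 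By pointwise non-negativity, $\{t>0:u(x,t)\le 0\}$ then coincides with this finite zero set. I expect this last upgrade to be the most delicate step: ruling out accumulation at $t=0$ with only $a\in L^2(\OOO)$, since the naive power series in $s$ need not converge at $s=0$ without extra decay of the $a_k$.
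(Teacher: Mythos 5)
Your route is genuinely different from the paper's, and it is the harder one. The paper does not re-derive any strong maximum principle: it obtains $u\in C((0,T];C(\ooo{\OOO}))$ from the regularity $u\in C((0,T];H^2(\OOO))$ of \cite{SY} plus the Sobolev embedding for $n\le 3$ (as you do), then introduces the auxiliary solution $v$ of the same problem with the shifted coefficient $c-\Vert c\Vert_{C(\ooo{\OOO})}\le 0$, cites \cite{Liu} for the finiteness of $\{t>0:\ v(x,t)\le 0\}$ in the non-positive-coefficient case, and concludes from Theorem \ref{t2} (since $c\ge c-\Vert c\Vert_{C(\ooo{\OOO})}$, hence $u\ge v$) that $\{t>0:\ u(x,t)\le 0\}\subset\{t>0:\ v(x,t)\le 0\}$ is finite. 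The only new ingredient the corollary requires is the comparison principle of Theorem \ref{t2}; everything else is outsourced to \cite{Liu}. Your proposal instead attempts to reprove the strong maximum principle of \cite{Liu} from scratch, which is where the gaps appear.

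Two concrete problems. First, the claim that the Mittag-Leffler decay ``forces $u(\cdot,t)\in D((-L)^m)$ for every $m$'' is false for $a\in L^2(\OOO)$: since $|E_{\alpha,1}(-\lambda t^{\alpha})|$ decays only like $\lambda^{-1}$, one has $\lambda_k^{2m}|E_{\alpha,1}(-\lambda_k t^{\alpha})|^2\sim \lambda_k^{2m-2}t^{-2\alpha}$, unbounded in $k$ for $m>1$; the smoothing of the time-fractional equation is limited to $u(\cdot,t)\in \mathcal{D}(A)=H^2(\OOO)\cap H^1_0(\OOO)$ (estimate (3.4) with $\gamma=1$). This still yields $C(\ooo{\OOO})$ for $n\le 3$, so the continuity conclusion survives, but your later appeal to ``instantaneous smoothing $u(\cdot,t_0)\in C^{\infty}(\ooo{\OOO})$'' is unavailable. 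Second, and decisively, the exclusion of zeros accumulating at $t=0^+$ is not actually established. The substitution $s=t^{\alpha}$ would require $\sum_k a_k\varphi_k(x)E_{\alpha,1}(-\lambda_k s)$ to extend analytically across $s=0$, but at $s=0$ this is the pointwise eigenfunction expansion $\sum_k a_k\varphi_k(x)$ of $a$, which need not converge for $a\in L^2(\OOO)$; you flag this difficulty yourself, but flagging it does not close it. Likewise, the ``linear independence of Mittag-Leffler functions'' step must be justified for the full infinite series (via the termwise Laplace-transform and residue argument of \cite{SY}, \cite{Liu}), not merely for finitely many terms. Since all of this is exactly the content of \cite{Liu}, the efficient proof is the paper's: reduce to the sign-definite case by Theorem \ref{t2} and cite that result.
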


\section{Solution mapping and its properties} 
\label{sec3}

\setcounter{section}{3}
\setcounter{equation}{0}\setcounter{theorem}{0}

Let us define an operator $A$ in $L^2(\OOO)$ by the relation
\vskip -10pt
$$
(Av)(x) = -\sumij \ppp_i(a_{ij}(x)\ppp_jv(x)) - c(x)v(x), \quad
x\in \OOO
$$
\vskip -3pt \noindent
with $\mathcal{D}(A) = H^2(\OOO) \cap H^1_0(\OOO)$ and  assume that the inequality
$$
c(x) < 0, \qquad  x\in \ooo{\OOO}    \eqno{(3.1)}
$$
\vskip -3pt \noindent
and the conditions on the coefficients $a_{ij}$ formulated at the beginning of Section \ref{sec2} are satisfied.
By $\Vert \cdot\Vert$ and $(\cdot,\cdot)$ we denote the standard norm and
the scalar product  in $L^2(\OOO)$, respectively.
Then it is known that the operator $A$ is self-adjoint and positive definite in
$L^2(\OOO)$  and therefore its spectrum consists of discrete positive eigenvalues $0 < \mu_1 \le \mu_2 \le
\cdots$
which are numbered according to their multiplicities and  $\mu_n \to +\infty$ as $n\to +\infty$.
Let $\va_n$ be an eigenvector corresponding to the eigenvalue $\mu_n$ such that
$(\va_n, \va_m) = 0$ if $n \ne m$ and $(\va_n,\va_n) = 1$.
Then it is also known that the system $\{ \va_n\}_{n\in \N}$ of the eigenvectors forms an
orthonormal basis in $L^2(\OOO)$ and for any $\gamma\ge 0$ we can define
the fractional powers $A^{\gamma}$ of the operator $A$ by the following relation (see e.g. \cite{Pa}):
\vskip -10pt
$$
A^{\gamma}v = \sum_{n=1}^{\infty} \mu_n^{\gamma} (v,\va_n)\va_n,
$$
\vskip - 4pt \noindent
where
\vskip - 14pt
$$
v \in \mathcal{D}(A^{\gamma})
:= \left\{ v\in L^2(\OOO): \thinspace
\sum_{n=1}^{\infty} \mu_n^{2\gamma} (v,\va_n)^2 < \infty\right\}
$$
\vskip -3pt \noindent
and
\vskip - 14pt
$$
\Vert A^{\gamma}v\Vert = \left( \sum_{n=1}^{\infty}
\mu_n^{2\gamma} (v,\va_n)^2 \right)^{\frac{1}{2}}.
$$
Let us define two other operators, $S(t)$ and $K(t)$, by the relations
\vskip -10pt
$$
S(t)a = \sum_{n=1}^{\infty} E_{\alpha,1}(-\mu_n t^{\alpha})
(a,\va_n)\va_n, \quad a\in L^2(\OOO), \thinspace t>0  \eqno{(3.2)}
$$
\vskip -4pt \noindent
and
\vskip -12pt
$$
K(t)a = \sum_{n=1}^{\infty} t^{\alpha-1}E_{\alpha,\alpha}(-\mu_n t^{\alpha})
(a,\va_n)\va_n, \quad a\in L^2(\OOO), \thinspace t>0,
                                                     \eqno{(3.3)}
$$
\vskip -2pt ]noindent
where $E_{\alpha,\beta}(z)$ denotes the Mittag-Leffler function defined by a convergent series as follows:
$$
E_{\alpha,\beta}(z) = \sum_{k=0}^\infty \frac{z^k}{\Gamma(\alpha\, k + \beta)},\ \alpha >0,\ \beta \in \mathbb{C},\ z \in \mathbb{C}.
$$
It follows directly from the definitions given above that
$A^{\gamma}K(t)a = K(t)A^{\gamma}a$
and $A^{\gamma}S(t)a = S(t)A^{\gamma}a$ for $a \in \mathcal{D}(A^{\gamma})$.
Moreover, the following norm estimates  were proved in \cite{SY}:
\vskip -10pt
$$\left\{ \begin{array}{l}
\Vert S(t)a\Vert \le C\Vert a\Vert, \\
\Vert A^{\gamma}K(t)a\Vert \le Ct^{\alpha(1-\gamma)-1}
\Vert a\Vert, \quad a \in L^2(\OOO), \thinspace t > 0, \thinspace
0 \le \gamma \le 1.
\end{array}\right.                    \eqno{(3.4)}
$$
To shorten the notations and focus on the dependence of the time variable $t$, henceforth we sometimes omit the variable  $x$ in the functions of two variables, $t$ and $x$, and write simply
$u(t) = u(\cdot,t)$, $F(t) = F(\cdot,t)$, $a = a(\cdot)$, etc.
As it was shown in \cite{GLY}, under the condition (3.1) and for $F \in L^2(\OOO\times (0,T))$ and $a \in L^2(\OOO)$ the weak solution $u$ to the initial-boundary-value problem (2.1)-(2.3)
satisfies the inclusion $u \in H^{\alpha}(0,T;L^2(\Omega)) \cap
L^2(0,T;H^2(\OOO) \cap H^1_0(\Omega))$ and can be represented by the formula
\vskip - 10pt
$$
u(t) = S(t)a + \int^t_0 K(t-s)F(s) ds =: L(a,F)(t), \quad t > 0.
                                           \eqno{(3.5)}
$$
Now we paraphrase the result first presented in \cite{luchko-1} and formulate it as our key lemma.

\vspace*{-3pt} 

\begin{lemma} 
\label{l1}
Let the condition (3.1) on the coefficient $c=c(x)$ hold true. If the initial condition $a \in L^2(\OOO)$ and the source function $F \in L^2(\OOO\times (0,T))$
satisfy the inequalities
$$
a(x) \ge 0, \quad F(x,t)\ge 0 \quad \mbox{for almost all }\ x \in \OOO \ \mbox{ and } \
0 < t < T,                  \eqno{(3.6)}
$$
\vskip -3pt \noindent
then
\vskip -10pt
$$
L(a,F)(t) \ge 0, \quad \mbox{for almost all } x \in \OOO \mbox{ and }
0 < t < T.                  \eqno{(3.7)}
$$
\end{lemma}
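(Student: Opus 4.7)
The plan is to establish the non-negativity of the weak solution $L(a,F)$ by reducing the situation to the pointwise maximum principle for strong solutions already proved in \cite{luchko-1}, via an approximation of the data $(a,F)$ by regular, non-negative data followed by passage to the limit using the representation (3.5) together with the bounds (3.4).

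First I would construct sequences $\{a_k\} \subset \mathcal{D}(A)$ and $\{F_k\} \subset C^1([0,T];\mathcal{D}(A))$ with $a_k \ge 0$ in $\OOO$, $F_k \ge 0$ in $\OOO \times (0,T)$, such that $a_k \to a$ in $L^2(\OOO)$ and $F_k \to F$ in $L^2(\OOO \times (0,T))$. A natural construction is to mollify $a$ and $F(\cdot,t)$, multiply by a smooth cutoff vanishing in a shrinking neighbourhood of $\ppp\OOO$ to enforce the boundary condition, and then take the positive part; convergence in $L^2$ is preserved because the map $v \mapsto v_+$ is an $L^2$-contraction and because $a,F \ge 0$ ensures that the negative parts introduced by the smoothing and cutoff vanish in the limit.

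For each $k$, the regularity of the data guarantees via (3.5) and \cite{GLY} that $u_k := L(a_k,F_k)$ is a classical solution of (2.1)--(2.3) belonging to $C(\ooo{\OOO}\times [0,T])$ with $u_k(\cdot,0)=a_k$. I would then invoke the pointwise maximum principle from \cite{luchko-1}: at a hypothetical interior point $(x_0,t_0) \in \OOO \times (0,T]$ where $u_k$ attains a negative minimum, the extremum principle for the Caputo derivative forces $\fdif u_k(x_0,t_0) \le 0$; combined with $\sum_{i,j}\ppp_i(a_{ij}\ppp_j u_k)(x_0,t_0) \ge 0$ (interior minimum and ellipticity), $c(x_0) u_k(x_0,t_0) > 0$ (by (3.1) and the sign of the minimum), and $F_k(x_0,t_0) \ge 0$, this contradicts (2.1). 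Since $u_k \ge 0$ on the parabolic boundary by construction, one concludes $u_k \ge 0$ on $\ooo{\OOO} \times [0,T]$.

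Finally, passage to the limit rests on the estimate
\[
\Vert u_k(t) - L(a,F)(t)\Vert \le C\Vert a_k - a\Vert + C \int_0^t (t-s)^{\alpha-1}\Vert F_k(s) - F(s)\Vert\, ds
\]
coming from (3.4), which together with a Young-type convolution bound yields $u_k \to L(a,F)$ in $L^2(\OOO \times (0,T))$; extracting an a.e.\ convergent subsequence, the pointwise non-negativity of each $u_k$ gives $L(a,F) \ge 0$ a.e.\ in $\OOO \times (0,T)$. The main obstacle will be the approximation step: obtaining $(a_k,F_k)$ that are simultaneously non-negative, smooth enough for $u_k$ to qualify as a strong solution in the sense of \cite{luchko-1}, and convergent to $(a,F)$ in $L^2$, all while respecting the homogeneous Dirichlet condition. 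The tension between the Dirichlet condition and the preservation of non-negativity is resolved by the mollify--cutoff--positive-part procedure at the cost of a small and controllable perturbation that vanishes in the limit.
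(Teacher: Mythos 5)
Your proposal follows essentially the same route as the paper: approximate $(a,F)$ by smooth, compactly supported, non-negative data, apply the pointwise maximum principle of \cite{luchko-1} to the resulting strong solutions, and pass to the limit in $L^2$ using the boundedness of the solution map. The only place where the paper does substantially more work is in verifying that for such regular data the weak solution really qualifies as a strong solution in the sense of \cite{luchko-1} --- namely the inclusions $L(a,F)\in C^1((0,T];C^2(\ooo{\OOO}))\cap C(\ooo{\OOO}\times[0,T])$ and $\ppp_tL(a,F)\in L^1(0,T;L^2(\OOO))$, established via the commutation of $A^{\gamma}$ with $S(t)$ and $K(t)$, the Mittag-Leffler asymptotics, and the Sobolev embedding --- a step you assert rather than prove.
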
  

\begin{proof}
We start with proving the statement that under the conditions (3.1) and (3.6) the inequality (3.7) holds true for $a \in C^{\infty}_0(\OOO)$ and $F \in C^{\infty}_0(\OOO\times (0,T))$. First we  prove the inclusions
\vskip -10pt
$$
L(a,F) \in C^1((0,T]; C^2(\ooo{\OOO})) \cap C(\ooo{\OOO}\times
[0,T])                      \eqno{(3.8)}
$$
\vskip -3pt \noindent
and
\vskip -12pt
$$
\ppp_tL(a,F) \in L^1(0,T;L^2(\OOO)).    \eqno{(3.9)}
$$
Since $A^{\gamma}S(t)a = S(t)A^{\gamma}a$ for all $\gamma > 0$ and
$a \in C^{\infty}_0(\OOO) \subset \mathcal{D}(A^{\gamma})$,
the part $S(t)a$ of the operator $L(a,F)$ satisfies the inclusion (3.8) (see Corollary 2.6 in \cite{SY}).
Moreover, by differentiation of the series that defines the Mittag-Leffler function we have the relation
\vskip -10pt
$$
S'(t)a = \sum_{n=1}^{\infty} (a,\va_n)\left( \frac{d}{dt}
E_{\alpha,1}(-\mu_n t^{\alpha})\right)\va_n 
$$
\vspace*{-6pt} 
$$
=\, -t^{\alpha-1}\sum_{n=1}^{\infty} \mu_n(a,\va_n)E_{\alpha,\alpha}
(-\mu_n t^{\alpha})\va_n .
$$
Using the known asymptotics of the Mittag-Leffler function, we readily get the following norm estimates:
\vskip -13pt
$$
\Vert S'(t)a \Vert \le t^{\alpha-1}\left(
\sum_{n=1}^{\infty} \mu_n^2(a,\va_n)^2\frac{C}{(1+\mu_nt^{\alpha})^2}
\right)^{\frac{1}{2}}
\le Ct^{\alpha-1}\Vert a\Vert_{H^2(\OOO)}, \quad t > 0
$$
and thus the estimate
\vskip -10pt
$$
\int^T_0 \Vert S'(t)a\Vert dt < \infty,
$$
which verifies that $S(t)a$ satisfies the inclusion (3.9).

Next we have to prove that the second part of the operator $L(a,F)$, the function
$w(t) := \int^t_0 K(t-s)F(s) ds$, satisfies the inclusions (3.8) and (3.9), too.
Because the Laplace convolution is commutative, we can represent it as $w(t) = \int^t_0 K(s)F(t-s) ds$.
Due to the inclusion $F \in C^{\infty}_0(\OOO\times (0,T))$, we have
\vskip - 13pt
$$
w'(t) = K(t)F(0) + \int^t_0 K(s)F'(t-s) ds
= \int^t_0 K(s)F'(t-s) ds
$$
\vskip -2pt \noindent
and then we arrive at the representation
\vskip -10pt
$$
A^{\gamma}w'(t) = \int^t_0 K(s)A^{\gamma}F'(t-s) ds, \quad t > 0.
$$
\vskip -2pt \noindent
By employing the norm estimate (3.4), for any $\gamma > 0$ we have the estimates
\begin{align*}
&\Vert A^{\gamma}w'(t) \Vert \le C\int^t_0 s^{\alpha-1}
\Vert A^{\gamma}F'(t-s) \Vert ds\\
\le & \, C\max_{0\le t \le T} \Vert A^{\gamma}F'(t)\Vert
\int^t_0 s^{\alpha-1} ds < \infty.
\end{align*}
For a sufficiently large $\gamma > 0$, we then apply the Sobolev embedding
theorem, and thus arrive at the inclusion
$$
w' \in C((0,T]; C^2(\ooo{\OOO}))
$$
that implies that the function $w=w(t)$ satisfies both the inclusion (3.8) and the inclusion (3.9).

The inclusions (3.8) and (3.9) mean that in the case under consideration  the weak solution $u=u(t)$ can be interpreted as a strong solution and we are now in position to apply the maximum principle for the strong solution from  \cite{luchko-1} to the solution $u(t) = L(a,F)(t)$ and thus conclude that the inequality (3.7) holds true if
$a \in C^{\infty}_0(\OOO)$ and $F \in C^{\infty}_0(\OOO\times (0,T))$.

To complete the proof of the lemma, let us suppose that $a \in L^2(\OOO)$ and
$F \in L^2(\OOO\times (0,T))$ be arbitrarily given.
Then we can choose the sequences
$a_n \in C^{\infty}_0(\OOO)$ and $F_n \in C^{\infty}_0(\OOO\times (0,T))$
such that $a_n \to a$ in $L^2(\OOO)$ and
$F_n \to F$ in $L^2(\OOO\times (0,T))$.
As we already proved,
$$
L(a_n,F_n) \ge 0 \quad \mbox{a.e. in } \OOO \times (0,T), n\in \mathbb{N}.
                                     \eqno{(3.10)}
$$
Moreover, it was shown in \cite{GLY}  that
$L(a_n,F_n) \to L(a,F)$ as $n \to \infty$ in
$L^2(0,T;H^2(\OOO)) \cap H^{\alpha}(0,T;L^2(\OOO))$.
Hence the inequality (3.10) yields the inequality $L(a,F)\ge 0$ almost everywhere in $\OOO\times (0,T)$ and
the proof of the lemma is completed.
\end{proof}  

\vspace*{-4pt} 

\section{Proofs of the main results} 
\label{sec4}

\setcounter{section}{4}
\setcounter{equation}{0}\setcounter{theorem}{0}

In this section, the proofs of the main results stated
in Section \ref{sec2} are presented.  For the proofs, the fixed point theorem and Lemma \ref{l1} formulated and proved in the previous section play a decisive role.

We start with a proof of Theorem \ref{t1}.

\begin{proof}
Let us set $M= \max_{x\in \ooo{\OOO}}\vert c(x)\vert$ and introduce an auxiliary function
$c_0(x) = c(x) - (M+1)$.  Then $c_0(x) < 0$ on $\ooo{\OOO}$ and the boundary-value problem (2.1)-(2.3) can be rewritten in the form
$$\left\{ \begin{array}{l}
\fdif u(x,t) = \sum_{i,j=1}^n \ppp_i(a_{ij}(x)\ppp_ju(x,t))
+ c_0(x)u(x,t) + \\
(F(x,t)+(M+1)u(x,t)),\ \ x \in \Omega, \thinspace t>0, \\
u(x,t) = 0, \qquad x\in \ppp\Omega, \thinspace t>0, \\
u(x,0) = a(x), \qquad x\in \Omega.
\end{array}\right.
\eqno{(4.1)}
$$
The weak solution $u$ to the equation from (4.1) with the coefficient $c_0$ by the unknown function can be represented by the formula (3.5) via the operator $L(a,F)$ and we thus arrive at the integral equation for the solution $u$ in the form
\vskip - 12pt
$$
u(t) = L(a, F+(M+1)u)(t), \quad t > 0.      \eqno{(4.2)}
$$
\vskip -3pt \noindent
Now the fixed point theorem technique is applied to analyze the equation (4.2). Let us consider  a sequence of functions $u_n$ defined as follows:
$$
u_0 = 0, \quad u_{n+1} = L(a,F+(M+1)u_n), \quad
n=0,1,2,\dots \ .                                      \eqno{(4.3)}
$$
We first prove that the sequence $u_n$ is convergent in $L^2(\OOO\times (0,T))$.
In fact, denoting  $u_{n+1} - u_n,\ n =0,1,2,\dots $ by $d_{n+1}$  we immediately get the representation
\vskip - 12pt
$$
d_1(t) = u_1(t), \quad d_{n+1}(t) = \int^t_0 K(t-s)d_n(s) ds, \quad n=1,2,\dots
\ .
$$
Let us set $M_0 = \max_{0\le t\le T}\Vert u_1(t)\Vert$.  It follows from the norm estimates (3.4) that
\vskip - 12pt
$$
\Vert d_{n+1}(t)\Vert \le C\int^t_0 (t-s)^{\alpha-1}\Vert d_n(s)\Vert ds.
$$
Thus we can estimate the norm of $d_2(t)$ as follows:
\vskip -10pt
$$
\Vert d_2(t)\Vert \le CM_0\frac{t^{\alpha}}{\alpha}.
$$
For the norm of $d_3(t)$ we get the estimates
\begin{align*}
& \Vert d_3(t)\Vert \le C\int^t_0 (t-s)^{\alpha-1} \frac{CM_0s^{\alpha}}
{\alpha} ds\\
=& \, C\frac{CM_0}{\alpha}\frac{\Gamma(\alpha)\Gamma(\alpha+1)}
{\Gamma(2\alpha+1)}t^{2\alpha}
= \frac{(C\Gamma(\alpha))^2M_0}{\Gamma(2\alpha+1)}t^{2\alpha}.
\end{align*}
Proceeding as above and applying the principle of mathematical induction we finally obtain the norm estimate
$$
\Vert d_n(t)\Vert \le \frac{(C\Gamma(\alpha))^{n-1}M_0}
{\Gamma((n-1)\alpha+1)}t^{(n-1)\alpha}, \quad 0 < t < T,\ n \in \mathbb{N}.
$$
\vskip -3pt \noindent
Hence
\vskip -10pt
$$
\max_{0\le t \le T}\Vert d_n(t)\Vert
\le \frac{(C\Gamma(\alpha)T^{\alpha})^{n-1}M_0}
{\Gamma((n-1)\alpha+1)}, \quad n \in \mathbb{N}.
$$
Because $u_n - u_0 = u_n = \sum_{k=1}^n d_k$, let us investigate the convergence of the series $\sum_{k=1}^\infty d_k$.
By the quotient convergence rule and using the known asymptotic behavior of the Gamma function, we get
$$
\lim_{k\to\infty} \left(
\frac{(C\Gamma(\alpha)T^{\alpha})^k M_0}
{\Gamma(k\alpha+1)}\right)
\left( \frac{(C\Gamma(\alpha)T^{\alpha})^{k-1}M_0}
{\Gamma((k-1)\alpha+1)}\right)^{-1} 
$$
\vspace*{-3pt} 
$$
= \lim_{k\to\infty} C\Gamma(\alpha)T^{\alpha}
\frac{\Gamma((k-1)\alpha+1)}
{\Gamma(k\alpha+1)}
< 1,
$$
so that the series $\sum_{k=1}^{\infty} d_k(t)$ and thus the sequence $u_n$ are both convergent in $C([0,T];L^2(\OOO))$.  According to construction of the sequence
$u_n$, it converges to the fixed point of the integral equation  (4.2), i.e., to the weak solution $u$ of the initial-boundary-value problem (4.1).

Let us now show the non-negativity of $u$. Because of the condition (4.3) and taking into account the inequalities $a(x) >0, \ F(x,t) \ge 0$, Lemma \ref{l1} yields the inequality
$u_1(x,t) \ge 0$ in $\OOO\times (0,T)$.  Then $F(x,t)+(M+1)u_1(x,t) \ge 0$ in
$\OOO\times (0,T)$ and we can apply Lemma \ref{l1} to the solution representation (4.3) with $n=1$ and obtain the inequality
$u_2(x,t) \ge 0$ in $\OOO\times (0,T)$.  Repeating these arguments, we arrive at the inequality
$u_n(x,t) \ge 0$ in $\OOO\times (0,T)$ for all $n \in \mathbb{N}$.
Since $u_n \to u$ in $C([0,T];L^2(\OOO))$, the inequality $u(x,t)\ge 0$ holds true in
$\OOO\times (0,T)$ for the weak solution $u$ of the of the initial-boundary-value problem (4.1), too.  The proof of Theorem \ref{t1} is completed.
\end{proof}

We proceed with a proof of Corollary \ref{c1}.

\begin{proof}  
Let us denote $u_{a_1,F_1} - u_{a_2,F_2}$ by $y$. The function $y = y(x,t)$ is thus the weak solution to the following initial-boundary-value problem
$$\left\{ \begin{array}{l}
\fdif y(x,t) = \sum_{i,j=1}^n \ppp_i(a_{ij}(x)\ppp_jy(x,t))
+ F_1(x,t) - F_2(x,t), \ x \in \Omega, \thinspace t>0, \\
y(x,t) = 0, \qquad x\in \ppp\Omega, \thinspace t>0, \\
y(x,0) = a_1(x) - a_2(x), \qquad x\in \Omega.
\end{array}\right.
$$
According to the conditions posed in Corollary \ref{c1}, the inequalities $F_1(x,t)-F_2(x,t)\ge 0$ and $a_1(x) - a_2(x) \ge 0$ hold true in
$\OOO\times (0,T)$.  Then Theorem \ref{t1} implies that
$y(x,t) = u_{a_1,F_1}(x,t) - u_{a_2,F_2}(x,t) \ge 0$ almost everywhere in $\OOO \times (0,T)$.  Thus the proof of the
corollary is completed.
\end{proof}  

Now a proof of Theorem \ref{t2} is presented.

\begin{proof} 
First we introduce an auxiliary function $z = u_{c_1} - u_{c_2}$ that is the  weak solution to the following initial-boundary-value problem
$$\left\{ \begin{array}{l}
\fdif z(x,t) = \sum\limits_{i,j=1}^n \ppp_i(a_{ij}(x)\ppp_jz(x,t))
+ c_1(x)z(x,t) + \\
(c_1(x)-c_2(x))u_{c_2}(x,t), \ x \in \Omega, \thinspace t>0, \\
z(x,t) = 0, \qquad x\in \ppp\Omega, \thinspace t>0, \\
z(x,0) = 0, \qquad x\in \Omega.
\end{array}\right.
								\eqno{(4.4)}
$$
Because the inequalities $F(x,t)\ge 0$ and $a(x) \ge 0$ hold true in $\OOO\times (0,T)$ and in $\OOO$, respectively, Theorem \ref{t1} yields that $u_{c_2}(x,t) \ge 0$ in
$\OOO\times (0,T)$.  Hence $(c_1(x)-c_2(x))u_{c_2}(x,t) \ge 0$ in
$\OOO\times (0,T)$. Applying now
Theorem \ref{t1} to the initial-boundary-value problem (4.4) leads to the inequality $z(x,t) = u_{c_1}(x,t) - u_{c_2}(x,t) \ge 0$ in $\OOO\times (0,T)$ and the proof of Theorem \ref{t2} is completed.
\end{proof}

Finally, we give a proof of  Corollary \ref{c2}.

\begin{proof}  
Under the condition  $c(x)\le 0$, the statement of the corollary was already proved in \cite{Liu}. Let us show that the corollary holds true also without this condition.  According to \cite{SY}, the weak solution $u$ to the initial-boundary-value problem (2.1)-(2.3) belongs to the functional space $C((0,T];H^2(\Omega))$.  For $n\le 3$, the Sobolev
embedding theorem implies that $H^2(\OOO) \subset C(\ooo{\OOO})$ and thus we get the inclusion
$u \in C((0,T]; C(\ooo{\OOO}))$.

Let us denote by $u$ the weak solution to the initial-boundary-value problem (2.1)-(2.3) with a coefficient $c\in C(\ooo{\OOO})$ by the unknown function
and with zero source function $F=F(x,t)\equiv 0$ and by $v$ the weak solution to (2.1)-(2.3)
with the coefficient $c-\Vert c\Vert_{C(\ooo{\OOO})}$ by the unknown function and  with zero source function $F=F(x,t) \equiv 0$. Because the inequalities $a(x) \ge 0$ and $c(x)-\Vert c\Vert_{C(\ooo{\Omega})}
\le 0$ hold true, for $v$ we can employ the results proved in  \cite{Liu} that say that
for an arbitrary but fixed $x \in \OOO$ there exists an at most finite set
$E_x$ such that
\vskip -10pt
$$
v(x,t) \le 0, \quad t \in E_x.   \eqno{(4.5)}
$$
\vskip -2pt \noindent
Since $c(x) \ge c(x) - \Vert c\Vert_{C(\ooo{\OOO})}$ in $\OOO$,
Theorem \ref{t2} leads to the inequality $u(x,t) \ge v(x,t)$ that together with the inequality (4.5)  completes the proof of Corollary \ref{c2}.
\end{proof} 

\vspace*{-3pt} 

\section{Conclusions and remarks} 

In this paper, we proved a weak maximum principle for the weak solution to an initial-boundary-value problem for a single-order time-fractional diffusion equation without a restriction on the sign of the coefficient $c=c(x)$ by the unknown function as well as some of its important consequences.  A result of this kind is well-known for the elliptic and parabolic type partial differential equations and for them the case of a bounded coefficient $c=c(x)$ is easily reduced to the case of a non-positive coefficient by constructing an auxiliary function with an exponential factor.  However, this technique does not work for the fractional diffusion equation. Instead, we reduced the problem with an arbitrary continuous coefficient to an integral equation for the solution to the problem with a negative coefficient and applied the fixed point theorem for the investigation of this equation.

From the maximum principle that we proved under weaker conditions  compared to those formulated in the already published papers (see the introduction for a short overview of the relevant publications),
a series of important consequences can be derived. In particular, we proved that the solution mapping $\{a, F\} \longrightarrow
u_{a,F}$ ($a$ and $F$ denote an initial condition and a source function of the problem under consideration, respectively, and $u_{a,F}$ denotes its weak solution)  preserves its sign. Moreover, the monotonicity of the solution with respect to the coefficient $c=c(x)$ by the unknown function has been shown. This solution property can be used among other things to characterize the set of
the points where the the weak solution can be non-positive.

It is worth mentioning that several other important results can be derived by the same arguments as we employed in the proof of
Theorem \ref{t1}. Let us briefly discuss one of them, namely, the maximum principle for a coupled system of the time-fractional diffusion equations with the fractional derivatives of the same order $\alpha$ $(0<\alpha <1$) in the form
$$
\fdif \left(
\begin{array}{c}
u_1(x,t) \\
\vdots   \\
u_N(x,t)
\end{array}\right)
= \Delta\left(
\begin{array}{c}
u_1(x,t) \\
\vdots   \\
u_N(x,t) \\
\end{array}\right)
$$
$$
+ \ \left(
\begin{array}{ccc}
p_{11}(x) & \cdots & p_{1N}(x) \\
\vdots   &  \vdots & \vdots    \\
p_{N1}(x) & \cdots & p_{NN}(x) \\
\end{array}\right) \ \
\left(
\begin{array}{c}
u_1(x,t) \\
\vdots   \\
u_N (x,t) \\
\end{array}\right)
+ \left(
\begin{array}{c}
F_1(x,t) \\
\vdots   \\
F_N(x,t) \\
\end{array}\right), \ x \in \OOO, \thinspace t>0.
$$
We assume that the inequalities
$p_{ij}(x) \ge 0$ hold true for $i\ne j$ a.e. in $\OOO$.

Then the inequalities $F_k(x,t)\ge 0$ a.e. in $\OOO\times (0,T)$ and
$u_k(x,0) \ge 0$ a.e. in $\OOO$ for $k=1,..., N$ yield the non-negativity of all solution components:
$$
u_k(x,t) \ge 0 \quad \mbox{a.e. in $\OOO\times (0,T)$ for $k=1,..., N$}.
$$
Our method can be also employed for derivation of the maximum principles for more general fractional differential equations
like e.g. the multi-term time-fractional diffusion equations, the diffusion equations of the distribute order and even for the general diffusion equations that were introduced and studied in \cite{Koch11} and \cite{luc-yam}.
These problems are worth to be considered and they will be studied elsewhere.

\vspace*{-6pt}  

\section*{Acknowledgements}

The first author Y.L. is partially supported by Bulgarian National Science Fund (Grant DFNI-I02/9).

The second 
author M.Y. is partially supported by Grant-in-Aid for Scientific
Research (S) 15H05740 of Japan Society for the Promotion of Science.

\vspace*{-10pt}


\end{document}